\theoremstyle{plain}
\newtheorem{theorem}{Theorem}
\newtheorem*{theorem*}{Theorem}
\newtheorem{corollary}[theorem]{Corollary}
\newtheorem*{corollary*}{Corollary}
\newtheorem*{lemma*}{Lemma}
\newtheorem*{proposition*}{Proposition}
\newtheorem*{conjecture*}{Conjecture}
\theoremstyle{definition}
\newtheorem{definition}[theorem]{Definition}
\newtheorem*{definition*}{Definition}
\newtheorem*{example*}{Example}
\newtheorem*{problem*}{Problem}
\theoremstyle{remark}
\newtheorem*{remark*}{Remark}
\thanks{The work presented here was supported by grant
  no.\ 060005013 from the Icelandic Research Fund}
\title[Dyck paths, SYT, and Pattern Avoiding Permutations]{Dyck paths, Standard Young Tableaux, and Pattern Avoiding Permutations}
\author[H.\ H.\ Gudmundsson]{Hilmar Haukur Gudmundsson}
\def\dd{\makebox[1.1ex]{\rule[.58ex]{.71ex}{.15ex}}}
\begin{document}
\begin{abstract}

We present a generating function and a closed counting formula in two variables that enumerate a family of classes of permutations that avoid or contain an increasing pattern of length three and have a prescribed number of occurrences of another pattern of length three. This gives a refinement of some previously studied statistics, most notably one by Noonan.  The formula is also shown to enumerate a family of classes of Dyck paths and Standard Young Tableaux, and a bijection is given between the corresponding classes of these two families of objects.  Finally, the results obtained are used to solve an optimization problem for a certain card game.

\noindent {\bf Key words}: Catalan numbers, lattice paths, Standard Young Tableaux, pattern avoidance, permutation statistics
\end{abstract}

\maketitle
\thispagestyle{empty}

\section{Introduction}\label{section-intro}
In this paper we look at three families of sets of combinatorial objects. The first family consists of all Dyck paths that start with at least $k$ upsteps, end with at least $p$ downsteps, and are concatenations of two or more components. So, except for a few trivial cases, that means such a path returns to the $x$-axis somewhere between the endpoints.

The second family consists of all Standard Young Tableaux with two rows, the top row being $d$ cells longer than the other. When $d=k+p$, the corresponding classes in the two families are equinumerous, which we demonstrate by giving a bijection. Consequently, we see that although we have mentioned two parameters for the paths, $k$ and $p$ (in addition to the one that marks size, $n$), only one is needed with regard to enumeration. So for example, the number of Dyck paths that start with at least $k+1$ upsteps and end with at least $p-1$ downsteps is the same as the number of Dyck paths that start with at least $k$ upsteps and ends with at least $p$ downsteps. We then derive a generating function and a closed counting formula for each class. The generating function for the classes of these families can then be given as $x^{k+p}C(x)^{k+p+1}$, where $C(x)$ stands for the Catalan generating function. 

The third family consists of several classes of permutations that either avoid an increasing subsequence of length three, or contain exactly one such subsequence, with restrictions on whether the elements of the subsequence are adjacent or not. A subsequence in this context will also be referred to as a \textit{pattern}. We then show for each of these classes that they satisfy the same recursion as a corresponding class in the family of Standard Young Tableaux, and we obtain a generating function and a closed counting formula for each one. When we set $d$ equal to one or zero, the generating function we obtain is the Catalan function itself, which counts the number of permutations that avoid a classical pattern of length three. The class that corresponds to $d=2$, contains the permutations that avoid an increasing subsequence of three elements and end with a descent. This result has already been established in \cite{Kitaev} but we provide a different proof. The classes corresponding to $d=4,5,6,7$ contain permutations which have a single occurrence of an increasing subsequence of three elements, with restrictions on whether these elements are allowed to be adjacent or not. The class corresponding to $d=5$ has already been dealt with in \cite{Noonan} and \cite{Little}, but we provide a different proof. In the final remarks, we explore how we can use the counting formula to find an optimal way of playing a card game.

\section{Background}
A \emph{Dyck path of semilength $n$} is a lattice path, that starts at the origin, ends in $(2n,0)$, and only contains upsteps $(1,1)$, denoted by $u$, and downsteps $(1,-1)$, denoted by $d$, in such a way that it never goes below the $x$-axis. We say a Dyck path \textit{returns} to the $x$-axis wherever it touches it.

\begin{definition}
Let $\mathcal{P}_{k,p}$ be the class of all Dyck paths that start with at least $k$ upsteps and end with at least $p$ downsteps. We say that a path $P$ in $\mathcal{P}_{k,p}$ is \textit{reducible in $\mathcal{P}_{k,p}$} if it is a concatenation of two or more Dyck paths such that at least one of the components in the concatenation is of semilength at least $k$, and another of semilength  at least $p$. If it is clear which class $\mathcal{P}_{k,p}$ we are referring to, we simply refer to $P$ as \textit{reducible}. We denote the class that consists of all the reducible paths in $\mathcal{P}_{k,p}$ by $\mathcal{D}_{k,p}$
\end{definition}

An example of a Dyck path with two components is given in Figure \ref{fig1}. The components meet in the vertex the arrow points to.
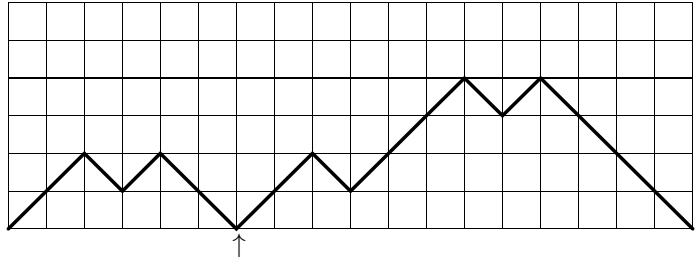
\begin{figure}
\newcommand\p{\circle*{1}}

\setlength{\unitlength}{1mm}
\begin{picture}(140,30)(0,0)
  \allinethickness{.03mm}
  \put(20,0){
    \put(0,0){\grid(90,30)(5,5)}
    \allinethickness{.5mm}
    \path(0,0)(5,5)(10,10)(15,5)(20,10)(25,5)(30,0)(35,5)(40,10)(45,5)
    (50,10)(55,15)(60,20)(65,15)(70,20)(75,15)(80,10)(85,5)(90,0)
    \put(29.5,-3){$\uparrow$}

}
\end{picture}
\caption{A Dyck path with two components.}
\label{fig1}
\end{figure}

The family of classes that was mentioned in Section 1 consists of the classes $\mathcal{D}_{k,p}$, for all nonnegative $p$ and $k$. When both $k$ and $p$ are greater than zero, every path belonging to $\mathcal{D}_{k,p}$ must touch the $x$-axis somewhere between the endpoints, since it must be a concatenation of at least two nonzero components, one starting with at least $k$ consecutive upsteps, and one ending with at least $p$ consecutive downsteps. When both $k$ and $p$ are equal to zero, it means that the empty path is included. This also means that one or more components can be empty, so the paths are not required to touch the $x$-axis, as was the case earlier, and so we see that this is simply the class of ordinary Dyck paths. When $k$ is equal to zero and $p$ is equal to one, or vice versa, we have the class of all non-empty Dyck paths, since now there must be at least one upstep (or downstep), belonging to the path.

A \emph{Standard Young Tableau} of shape $(2,n)$ consists of two horizontal rows of cells, one on top of the other, such that the top row is as long, or longer than the bottom row. Each cell is then assigned a unique number from $\left\{1,2,\ldots,2n\right\}$ so that the numbers in each row are strictly increasing from left to right, and the numbers in each column from top to bottom are also strictly increasing. An example of a Standard Young Tableau of semisize 9 is given in Figure \ref{fig5}. See chapter 6 in \cite{Bona2} for further explanations and diagrams.

\begin{definition}
Let $S$ be a rectangular Standard Young Tableau with two rows of semisize $n$. We say $S$ is \textit{reducible} if there exists an $i\leq n$ such that every number $1,2,\ldots,2i-2$ is included in a cell to the left of the $ith$ column. If $S$ does not satisfy this, then we say $S$ is irreducible.
\end{definition}

The Standard Young Tableau in Figure \ref{fig5} is reducible. The cells to the left of the arrow contain all the values in $\left\{1,2,\ldots,6\right\}$.

\begin{definition}
We define $\mathcal{Y}_{d}$ as the class of all Standard Young Tableaux that consist of two rows, one being $d$ cells longer than the other.
\end{definition}

\begin{figure}
\newcommand\p{\circle*{1}}

\setlength{\unitlength}{1mm}
\begin{picture}(140,30)(33,-10)

\allinethickness{.03mm}

\put(0,-152){
\put(12.5,-2){
\put(60,150){\grid(45,5)(5,5)}
\put(60,145){\grid(45,5)(5,5)}
\put(63,152){\makebox(0,0){1}}
\put(68,152){\makebox(0,0){2}}
\put(73,152){\makebox(0,0){5}}
\put(78,152){\makebox(0,0){7}}
\put(83,152){\makebox(0,0){8}}
\put(88,152){\makebox(0,0){9}}
\put(93,152){\makebox(0,0){10}}
\put(98,152){\makebox(0,0){11}}
\put(103,152){\makebox(0,0){12}}

\put(63,147){\makebox(0,0){3}}
\put(68,147){\makebox(0,0){4}}
\put(73,147){\makebox(0,0){6}}
\put(78,147){\makebox(0,0){13}}
\put(83,147){\makebox(0,0){14}}
\put(88,147){\makebox(0,0){15}}
\put(93,147){\makebox(0,0){16}}
\put(98,147){\makebox(0,0){17}}
\put(103,147){\makebox(0,0){18}}

\put(74,142){$\uparrow$}
}
}
\end{picture}
\caption{A reducible Standard Young Tableau.}
\label{fig5}
\end{figure}

Lastly, we need to be familiar with the Catalan generating function, which we will refer to as $C(x)$. It is given by $$C(x)= \frac{1-\sqrt{1-4x}}{2x},$$ which satisfies $$C(x)=\sum_{n\geq0}{\frac{1}{n+1}{2n\choose n}}x^{n}.$$ The number $\frac{1}{n+1}{2n\choose n}$ is called the \textit{nth Catalan number}.

\section{The main counting formula and a bijection}

With the definitions from Section 2 in mind we arrive at the following result.

\begin{theorem}
  The number of Dyck paths that start with at least $k$ upsteps, end
  with at least $p$ downsteps, and touch the x-axis somewhere
  between the first upstep and the last downstep, is given by the
  generating function
	\[D_{k,p}(x)=x^{p+k} C(x)^{k+p+1},
\]
where $C(x)$ is the Catalan function. The $nth$ coefficient of this generating function is given by the formula 
	\[
	D_n = \frac{k+p+1}{n+1}{2n-k-p \choose n}.
\]

\end{theorem}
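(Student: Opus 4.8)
The plan is to first establish the generating function identity $D_{k,p}(x)=x^{k+p}C(x)^{k+p+1}$ and then extract its $n$th coefficient by a standard power-of-Catalan formula. The backbone is the lemma that the Dyck paths beginning with at least $m$ upsteps have generating function $x^{m}C(x)^{m+1}$, with $x$ marking semilength. To prove it I would write such a path as $u^{m}R$, where $R$ is an arbitrary nonnegative lattice path from height $m$ to height $0$; cutting $R$ at the first instants it reaches heights $m-1,m-2,\dots,0$ displays it as $m$ blocks, each an elevated Dyck path followed by a single downstep, capped by a final Dyck path at level $0$. The $m$ lone downsteps carry no semilength, so $R$ contributes $C(x)^{m+1}$ while the prefix $u^{m}$ contributes $x^{m}$. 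Reversing paths and interchanging $u\leftrightarrow d$ shows dually that the Dyck paths ending with at least $p$ downsteps have generating function $x^{p}C(x)^{p+1}$.

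For the principal case $k,p\ge 1$ I would factor a path $P$ of the theorem uniquely as $P=A_{1}\,M\,A_{m}$, where $A_{1}$ is its first component, $A_{m}$ its last component, and $M$ the Dyck path formed by the components in between (possibly empty). Because $A_{1}$ and $A_{m}$ are then distinct nonempty arches, this factorization is available precisely when $P$ touches the $x$-axis in its interior; furthermore $P$ begins with at least $k$ upsteps if and only if $A_{1}$ does, and ends with at least $p$ downsteps if and only if $A_{m}$ does. Writing an arch as $uBd$, it begins with at least $k$ upsteps exactly when $B$ begins with at least $k-1$ upsteps, so by the lemma the arches starting with at least $k$ upsteps have generating function $x\cdot x^{k-1}C(x)^{k}=x^{k}C(x)^{k}$, and dually the arches ending with at least $p$ downsteps give $x^{p}C(x)^{p}$. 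Multiplying the three factors yields $x^{k}C(x)^{k}\cdot C(x)\cdot x^{p}C(x)^{p}=x^{k+p}C(x)^{k+p+1}$. The boundary cases $k=0$ or $p=0$, in which reducibility degenerates because empty components are admitted, collapse under that convention to the class of Dyck paths ending with at least $p$ downsteps (respectively starting with at least $k$ upsteps), which the lemma already enumerates, again in agreement with the formula.

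It remains to extract the coefficient, which is routine algebra: $[x^{n}]\,x^{k+p}C(x)^{k+p+1}=[x^{n-k-p}]\,C(x)^{k+p+1}$, and the standard identity $[x^{j}]C(x)^{r}=\frac{r}{2j+r}\binom{2j+r}{j}$, provable by Lagrange inversion from $C=1+xC^{2}$, specializes at $r=k+p+1$ and $j=n-k-p$ to $\frac{k+p+1}{n+1}\binom{2n-k-p}{n}$. Equivalently, since the lemma identifies $x^{k+p}C(x)^{k+p+1}$ as the generating function of Dyck paths starting with at least $k+p$ upsteps, one can obtain $D_{n}$ directly from the ballot problem. I expect the main difficulty to lie not in this algebra but in the combinatorial setup: verifying that $P\mapsto(A_{1},M,A_{m})$ is a genuine bijection onto the stated class, and in particular that imposing the leading-upstep condition on $A_{1}$ and the trailing-downstep condition on the separate component $A_{m}$ faithfully captures every path meeting both requirements at once, together with the degenerate behavior of reducibility when $k$ or $p$ vanishes.
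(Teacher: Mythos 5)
Your proof is correct and takes essentially the same route as the paper's: the same first-arch/middle/last-arch factorization $P=A_1MA_m$ for $k,p\ge 1$, with component generating functions $x^kC(x)^k$, $C(x)$, $x^pC(x)^p$, separate handling of the degenerate cases $k=0$ or $p=0$, and standard coefficient extraction. If anything, your write-up is more careful than the paper's, which conflates the class of all Dyck paths starting with at least $k$ upsteps (generating function $x^kC(x)^{k+1}$, as in your lemma) with the class of irreducible such paths (generating function $x^kC(x)^k$) in its iteration $\mathcal{D}_k = u^k\times\left[d\times\mathcal{D}\right]^k$; you keep these distinct, so no compensating slips are needed to land on $x^{k+p}C(x)^{k+p+1}$.
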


\begin{proof}
  For given $p$ and $k$, the paths in question form a subset in the
  set $\mathcal{D}$ of all Dyck paths. If $k=p=0$, then $\mathcal{D}_{k,p}$ =
  $\mathcal{D}$. If $k=1$ and $p=0$, or $k=0$ and $p=1$ then
  $\mathcal{D}_{k,p}=\mathcal{D} \backslash \{\epsilon\}$, where
  $\epsilon$ stands for the empty path. Now let $k,p \geq 1$. Every path $P$ in $D_{k,p}$ thus has to touch the $x$-axis somewhere between the two
  endpoints. It can therefore be viewed as a concatenation of two or more Dyck paths. Moving from left to right, consider the first irreducible component. The path $P$ starts with at least $k$ upsteps, and returns to the $x$-axis at some point $(v_1,0)$. We also know that the last irreducible component ends with at least $p$-downsteps. If this last component starts at $(v_2,0)$, then the part of the path that starts in $(v_1,0)$ and ends in $(v_2,0)$ is an ordinary Dyck path with no restrictions (it might even be empty), see Figure
  1. We will count the number of elements in $\mathcal{D}_{k,p}$ by considering each of these three components of $P$ seperately. To begin with, we know that the number of ways we can construct the part of the path that begins at $(v_1,0)$ and ends at $(v_2,)$ is given by the Catalan function. Let us consider the set of Dyck paths that start with at least $k$ consecutive
  upsteps, and denote it by $\mathcal{D}_k$ (so $\mathcal{D}_1$ is the class of all non-empty
  Dyck paths). Since we can consider $P$ to be a concatenation of three paths, we can calculate the number of ways we can construct $P$ by considering the Cartesian product of the respective sets of paths, that is, $D_{k}, D$ and $D_{p}$. We thus obtain $\mathcal{D}_k = u \times \mathcal{D}_{k-1}\times d
  \times \mathcal{D}$, where $u$ stands for an upstep, and $d$ stands
  for a downstep. By iteration, we obtain
	\[
\mathcal{D}_k =   u^k \times \left[d\times \mathcal{D} \right]^{k}.
\]
If we translate this to generating functions we have
$D_{k}(x)=x^{k}D(x)^{k}$, where $D_{k}(x)$ is the generating function
for $\mathcal{D}_k$, and $D(x)$ is the Catalan generating function $C(x)$. The part of the original lattice path that ended
with $p$ consecutive downsteps obviously falls under the same
reasoning. Thus, $D_{k,p}(x) =
x^{k+p}D_{k}(x)D(x)D_{p}(x)= x^{k+p}D(x)^{k+p+1}$. The $nth$
coefficient, $D_n$ can then be extracted from the expression by
standard methods.\end{proof}

\enlargethispage{10mm}
By inspection, we realize that although we have at this point accumulated three different variables, $k,p$ and $n$, only two are needed. Should we denote $k+p$ by a new variable, say $d$, our results are simplified.
\begin{corollary}
Let $d=p+k$. Then the generating function and counting formula in Theorem 4 are equal to
$x^{d}C(x)^{d+1}$ and \begin{equation}
\frac{d+1}{n+1}{2n-d \choose n},\end{equation} respectively.
\end{corollary}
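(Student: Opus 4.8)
The plan is to observe that the two expressions delivered by Theorem~4 depend on $k$ and $p$ only through their sum, so introducing the single variable $d=p+k$ is nothing more than a direct substitution. First I would take the generating function $D_{k,p}(x)=x^{p+k}C(x)^{k+p+1}$ and replace $p+k$ (equivalently $k+p$) by $d$ in the two exponents where it occurs; since these exponents are the only places $k$ and $p$ enter, this immediately produces $x^{d}C(x)^{d+1}$ with no rearrangement.

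Next I would treat the closed formula $D_n=\frac{k+p+1}{n+1}{2n-k-p \choose n}$ in the same fashion. Substituting $d=k+p$ turns the numerator factor $k+p+1$ into $d+1$ and the upper entry of the binomial coefficient $2n-k-p$ into $2n-d$, yielding $\frac{d+1}{n+1}{2n-d \choose n}$, which is exactly the expression in equation~(1). This completes the verification.

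There is essentially no obstacle to overcome: the content of the corollary is not a fresh computation but the recognition, already foreshadowed in Section~1, that the enumeration is governed by a single parameter rather than by $k$ and $p$ separately. The only point worth emphasizing is that this reparametrization is precisely what licenses the statements made throughout the rest of the paper, where all classes $\mathcal{D}_{k,p}$ sharing a common value of $k+p$ are counted by the same function; in particular the equinumerosity remarks of the introduction (for instance, that requiring at least $k+1$ initial upsteps and at least $p-1$ final downsteps gives the same count as the pair $k,p$) follow at once from this observation.
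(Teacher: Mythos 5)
Your proof is correct and matches the paper exactly: the paper treats this corollary as an immediate consequence of Theorem 4 (with no separate proof beyond the remark that $k$ and $p$ enter the formulas only through their sum), and your direct substitution $d=k+p$ into both the generating function and the counting formula is precisely that argument.
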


We will now see that the number of Standard Young Tableaux of shape $(n,n-d)$ is given by the same generating function as our lattice paths. 

\begin{theorem}
Let $d=k+p$. The class of Dyck paths that begin with at least $k$ successive upsteps, end with at least $p$ successive downsteps, and touch the $x$-axis at least once somewhere between the endpoints is equinumerous with the class of Standard Young Tableaux of shape $(n,n-d)$.
\end{theorem}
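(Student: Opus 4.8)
Since Theorem 4 and its Corollary already give the cardinality of the Dyck-path class as $\frac{d+1}{n+1}\binom{2n-d}{n}$, the plan is to obtain the same count on the tableau side through an explicit bijection, routing both families through ordered $(d+1)$-tuples of Dyck paths, the objects naturally enumerated by $x^{d}C(x)^{d+1}$. First I would pass from a Standard Young Tableau $S$ of shape $(n,n-d)$ to a lattice path $Q(S)$ by the standard reading rule: scanning the entries $1,2,\ldots,2n-d$ in increasing order, I record an upstep when the entry lies in the top row and a downstep when it lies in the bottom row. Row-strictness and column-strictness are exactly equivalent to $Q(S)$ being a nonnegative $\pm1$ path from the origin that ends at height $n-(n-d)=d$, so this step is a size-preserving bijection onto nonnegative paths to height $d$. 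Such a path factors uniquely as $B_{0}\,u\,B_{1}\,u\cdots u\,B_{d}$, where each marked $u$ is the last step that leaves its current height for good and each $B_{i}$ is an ordinary Dyck path; this is the combinatorial meaning of $x^{d}C(x)^{d+1}$ and is clearly reversible.

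Next I would produce the matching decomposition of a path $P$ in the Dyck-path class. Using the three-part splitting from the proof of Theorem 4, $P$ is its first irreducible component, which is a primitive Dyck path beginning with at least $k$ upsteps, followed by an unconstrained middle Dyck path, followed by its last irreducible component, a primitive Dyck path ending with at least $p$ downsteps. A primitive path beginning with at least $k$ upsteps is exactly $u^{k}(A_{1}d)(A_{2}d)\cdots(A_{k}d)$ for Dyck paths $A_{1},\ldots,A_{k}$ (decompose by the downsteps that pass through heights $k,k-1,\ldots,1$ for the last time), so it carries $k$ Dyck paths; symmetrically the last component carries $p$ Dyck paths, and the middle carries one. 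Reading these off in order gives an ordered $(d+1)$-tuple of Dyck paths, and the assignment is invertible because $k$ and $p$ are fixed. Composing this with the inverse of the tableau factorization identifies the semilength-$n$ paths of $\mathcal{D}_{k,p}$ with the tableaux of shape $(n,n-d)$, and since both correspond to $(d+1)$-tuples of Dyck paths of total semilength $n-d$ the two classes are equinumerous.

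The main obstacle I anticipate is bookkeeping rather than ideas: I must check that the first/last primitive-component split is canonical and that the hypothesis that $P$ touches the $x$-axis strictly between its endpoints is precisely what makes the middle Dyck path well defined, and I must handle the degenerate cases $k=0$ or $p=0$ (where the class collapses to all, respectively all nonempty, Dyck paths) so that the tuple still has the right length. I would confirm that everything is size-compatible by tracking generating functions: the tableau side yields $x^{d}C(x)^{d+1}$ through the level factorization, while Theorem 4 gives the identical $x^{k+p}C(x)^{k+p+1}$ on the Dyck side, so with $d=k+p$ the bijection is forced to preserve the enumeration. As an independent check, the hook-length formula on the shape $(n,n-d)$ evaluates to $\frac{(2n-d)!\,(d+1)}{(n+1)!\,(n-d)!}=\frac{d+1}{n+1}\binom{2n-d}{n}$, agreeing with the Corollary.
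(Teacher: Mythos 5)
Your proposal is correct in outline but takes a genuinely different route from the paper, and one of your two factorizations is justified incorrectly. The paper composes the classical reading bijection (tableaux of shape $(n,n-d)$ correspond to Dyck paths ending with at least $d=k+p$ downsteps) with an explicit geometric surgery $\zeta$: locate the vertex $v_1$ where exactly $p$ downsteps remain, draw a horizontal chord leftward to a vertex $v_2$, excise the segment from $v_2$ to $v_1$, reflect it left-to-right, and prepend it; this converts ``ends with $\geq k+p$ downsteps'' into ``starts with $\geq k$ upsteps, ends with $\geq p$ downsteps, and touches the axis in between.'' You instead route both classes through ordered $(d+1)$-tuples of Dyck paths, via the ballot-path factorization $B_0uB_1u\cdots uB_d$ on the tableau side and the first/middle/last-component factorization on the Dyck side. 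That is a legitimate alternative, and it has a real advantage the paper's surgery lacks: it makes the generating function $x^dC(x)^{d+1}$ combinatorially transparent on both sides simultaneously, and it explains structurally why only $k+p$ matters.

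The gap to repair is in your decomposition of the first primitive component. You write that a primitive path beginning with at least $k$ upsteps factors as $u^k(A_1d)(A_2d)\cdots(A_kd)$ by marking ``the downsteps that pass through heights $k,k-1,\ldots,1$ for the \emph{last} time.'' The factorization itself is true, but the last-passage marking does not produce it: after the last downstep from height $i$ to $i-1$, the rest of a primitive path is confined to the strip $1\le y\le i-1$, so the resulting segments are height-bounded paths rather than free Dyck paths. Concretely, for $uuduuddd$ the last-passage marking yields the segments $duud$ (which dips below its starting level, hence is not a Dyck path at level $2$) and the empty path, whereas the correct tuple is $(\epsilon,\,uudd)$. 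You must mark the \emph{first} downstep into each of the levels $k-1,k-2,\ldots,0$: the segment before the first descent into level $k-1$ stays at height $\ge k$, the next stays at height $\ge k-1$, and so on, which is exactly what makes each $A_i$ a free Dyck path. Note the asymmetry with the ballot side, where your last-exit marking is the right one because there the path terminates \emph{above} the marked levels. With that correction, and with the paper's conventions for the degenerate cases $k=0$ or $p=0$ (where the touching requirement is waived, as you note), your argument is complete, and your hook-length cross-check indeed reproduces the count $\frac{d+1}{n+1}\binom{2n-d}{n}$ of Corollary 5.
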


\begin{proof}
To begin with, we shall describe a well known bijection between Dyck paths and Standard Young Tableaux with two rows of equal length. By looking at a Dyck path of semilength $n$, we can enumerate the steps in a straightforward way, by assigning $1$ to the initial step, $2$ to the second step, and so on. Now, we fill out the corresponding Standard Young Tableau, by placing $1$ in the left upper corner, and then placing the next value immediately to the right of $1$ in the upper row if step number $2$ is an upstep, but immediately below $1$ if it is a downstep. And in this fashion we add the numbers to the tableau to the leftmost vacant cell in the upper row if the corresponding step in the Dyck path is an upstep, and in the leftmost vacant cell in the lower row if it is a downstep. We see that step number $2n$ then always ends up in the right lower corner of the tableau. Note that this means that if we have $k$ consecutive upsteps in the beginning of the path, the first $k$ numbers are all in the upper row in the tableau, and if it ends with $p$ downsteps the last $p$ numbers are all in the bottom row.

Recall that we have set $d=k+p$. When we consider the class of Standard Young Tableaux of shape $(n,n-d)$ we see that if we take a tableau from the class, and add $d$ cells to the bottom row, containing the values $2n-d+1,...,2n$, we have come across a trivial bijection between this class and the class of Standard Young Tableaux of shape $(n,n)$ where the values $2n-d+1,...,2n$ are fixed in the bottom row. With the preceding paragraph in mind, we know that we have a bijection from this latter class to the class of Dyck paths that end with at least $k+p$ downsteps. In order to prove our theorem, we construct a bijection $\zeta: \mathcal{D'}_{k+p} \rightarrow \mathcal{D}_{k,p}$, where $\mathcal{D'}_{k+p}$ is the class of Dyck paths that end with at least $k+p$ downsteps and $\mathcal{D}_{k,p}$ was defined in Definition 1. We then have a composite bijective function taking us from $\mathcal{Y}_{k,p}$ to $\mathcal{D}_{k,p}$.

Let $k,p$ be nonnegative, and choose a path from $\mathcal{D'}_{k,p}$. Locate the vertex on the path where only $p$ downsteps remain to be traversed, and name it $v_1$. Now, extend a horizontal line segment from $v_1$, to the left until it touches the lattice path again. Let us denote the vertex that the segment stops at by $v_2$. Now we remove the part of the lattice path that starts with $v_2$ and ends with $v_1$. That leaves the part of the lattice path that starts at the origin and ends at $v_2$, and $p$ successive downsteps that start at $v_1$. We then glue these two parts together, so that $v_1$ and $v_2$ become the same vertex, meaning that the part of the path that leads up to $v_2$ now has a segment of $p$ successive downsteps in the end which makes it a Dyck path. The final step is taking the segment we removed, inverting it from left to right, and gluing it to the left of the path we formed earlier, so that the final vertex of the inverted segment now becomes the same as the initial vertex of the path we just formed from the two separate parts. What remains is a Dyck path that starts with at least $k$ upsteps and ends with at least $p$ downsteps, which touches the $x$-axis at least once (where the latter gluing took part), and so we have formed a lattice path belonging to the class $\mathcal{D}_{k,p}$. The process is illustrated in Figure \ref{fig2}.

The inverse procedure works as follows. We have before us a Dyck path which begins with $k$ successive upsteps and ends with $p$ successive downsteps, and which is required to touch the $x$-axis somewhere between its initial and final vertices. Moving from left to right, we trace the lattice path until we find the first instance of it touching the $x$-axis. Let $s$ denote the vertex where this happens. Let $P_1$ denote the part of the lattice path that begins at the initial vertex and ends at $s$. Let $P_2$ be the part of the lattice path that starts in $s$ and ends at the final vertex. We now have two smaller lattice paths, $P_1$ and $P_2$ which we will rearrange to create a new lattice path. We know that $P_2$ has at least $p$ successive downsteps in the end, and now we erase the last $p$ (down)steps, to obtain $P_2'$ and attach the initial vertex of $P_1$ to the final vertex of $P_2'$, that is, we move $P_1$ up and to the right of $P_2'$ and glue these two parts together. We now have a path that ends precisely $p$ steps above the $x$-axis and we add $p$ downsteps, to the final vertex of $P_1$ and so it becomes a Dyck path, and thus representable by a standard Young tableau where the values $2n-d+1,...,2n$ are in the bottom row.\end{proof}

\begin{figure}
\newcommand\p{\circle*{1}}

\setlength{\unitlength}{1mm}
\begin{picture}(200,120)(33,-10)

\allinethickness{.03mm}

\put(12.5,-53){
\put(60,150){\grid(45,5)(5,5)}
\put(60,145){\grid(25,5)(5,5)}
\put(63,152){\makebox(0,0){1}}
\put(68,152){\makebox(0,0){3}}
\put(73,152){\makebox(0,0){4}}
\put(78,152){\makebox(0,0){5}}
\put(83,152){\makebox(0,0){9}}
\put(88,152){\makebox(0,0){10}}
\put(93,152){\makebox(0,0){11}}
\put(98,152){\makebox(0,0){12}}
\put(103,152){\makebox(0,0){13}}

\put(63,147){\makebox(0,0){2}}
\put(68,147){\makebox(0,0){6}}
\put(73,147){\makebox(0,0){7}}
\put(78,147){\makebox(0,0){8}}
\put(83,147){\makebox(0,0){14}}

}

\put(94.5,65){$\updownarrow$}

\put(12.5,-44){
\put(60,120){\grid(45,5)(5,5)}
\put(60,115){\grid(45,5)(5,5)}
\put(63,122){\makebox(0,0){1}}
\put(68,122){\makebox(0,0){3}}
\put(73,122){\makebox(0,0){4}}
\put(78,122){\makebox(0,0){5}}
\put(83,122){\makebox(0,0){9}}
\put(88,122){\makebox(0,0){10}}
\put(93,122){\makebox(0,0){11}}
\put(98,122){\makebox(0,0){12}}
\put(103,122){\makebox(0,0){13}}
\put(108,122){\makebox(0,0){}}
\put(113,122){\makebox(0,0){}}

\put(63,117){\makebox(0,0){2}}
\put(68,117){\makebox(0,0){6}}
\put(73,117){\makebox(0,0){7}}
\put(78,117){\makebox(0,0){8}}
\put(83,117){\makebox(0,0){14}}
\put(88,117){\makebox(0,0){15}}
\put(93,117){\makebox(0,0){16}}
\put(98,117){\makebox(0,0){17}}
\put(103,117){\makebox(0,0){18}}

}

\put(50,36){
\put(0,0){\grid(90,25)(5,5)}
\path(0,0)(5,5)(10,0)(15,5)(20,10)(25,15)(30,10)(35,5)(40,0)(45,5)(50,10)(55,15)(60,20)(65,25)(70,20)(75,15)(80,10)(85,5)(90,0)
}
\put(-55,0){
\put(105,0){\grid(90,25)(5,5)}
\path(105,0)(110,5)(115,10)(120,15)(125,10)(130,5)(135,0)(140,5)(145,0)(150,5)(155,10)(160,15)(165,10)(170,5)(175,0)(180,5)(185,10)(190,5)(195,0)
}

\put(94.5,30){$\updownarrow$}
\put(94.5,85){$\updownarrow$}

\allinethickness{.5mm}

\path(50,0)(55,5)(60,10)(65,15)(70,10)(75,5)(80,0)
\put(0,-24){
\path(100,70)(105,75)(110,80)(115,85)(120,80)(125,75)(130,70)
}
\end{picture}
\caption{Bijection between $\mathcal{Y}_{2,2}$ and $\mathcal{D}_{2,2}$.}
\label{fig2}
\end{figure}

\section{Patterns}

Let $\pi = a_1a_2,\ldots,a_n$, where $a_i = \pi(i) = \pi_i$, be a permutation
of length $n$. For a given $k \leq n$, let $p = b_1b_2,\ldots,b_k$ be
another permutation. We say that $\pi$ contains $p$ if there is a
sequence $\pi'$ within $\pi$ such that the relationship between the
elements in $\pi'$ is the same with regard to the usual order relation
on the integers as that of $p$. We refer to such a sequence as a
\textit{pattern}. For example, let $p=123$ and $\pi = 34215$. Then
$\pi$ has one occurrence of $p$, namely the sequence $345$. Another
pattern we might consider is $q=132$, but we see that $\pi$ has no
occurrence of $q$. We might in addition require that some of the
elements in the pattern are adjacent in the permutation. From now on,
we denote a pattern of three elements, where the elements are not required to be adjacent by $(a\dd b\dd c)$, and the set of all permutations of
length $n$ that avoid the pattern $(a\dd b\dd c)$ will be denoted by
$S_{n}(a\dd b\dd c)$. If two elements in the pattern are required to be
adjacent in the permutation, we denote it by removing the dash between
the two elements, so that if, for instance, $b,c$ are required to be
adjacent in the permutation, we denote the pattern as $(a\dd bc)$. For example, the permutation $\pi'=14253$ has a single occurrence of the pattern $(1\dd 2\dd 3)$, consisting of the numbers $1, 2$ and $3$. There is, however, no occurrence of the pattern $(1\dd 23)$ in $\pi'$. The
set of all permutations of length $n$ that avoid $(a\dd bc)$ is thus
denoted by $S_{n}(a\dd bc)$. We denote the class of all permutations that
contain the pattern $(a\dd bc)$ exactly $k$ times by $S^{k}_{n}(a\dd bc)$. If a
permutation $\pi$ belongs to this class then we write
$(a\dd bc)\pi=k$.%Furthermore,

It is well known that the number of permutations that avoid the pattern $(1\dd 2\dd 3)$ is given by the Catalan numbers. Looking at Equation (1) in Corollary 5, we see that when $d=0$, we obtain the generating function and the counting formula for the Catalan numbers, and when we set $d=1$, we again obtain the Catalan sequence, without the extra $1$ denoting the number of Dyck paths of size zero. The following results will be established below. The number of permutations $\pi \in S_n(1\dd 2\dd 3)$ that end in a descent is given by Equation (1) in Corollary 5 with $d=2$. The number of permutations $\pi \in S_{n-1}^1 (1\dd 23)$ for which $(1\dd 2\dd 3)\pi = 1$ is given by Equation (1) with $d=4$. The number of permutations $\pi \in S_{n-2}^1 (1\dd 2\dd 3)$ is given by Equation (1) with $d=5$. The number of permutations $\pi \in S_{n-2}^1 (1\dd 2\dd 3)$ for which $(1\dd 23)\pi = 0$ is given by Equation (1) with $d=6$. And lastly, the number of permutations $\pi \in S_{n-2}^1 (1\dd 2\dd 3)$ for which $(1\dd 23)\pi = 0$ and $(12\dd 3)\pi=0$ is given by Equation (1) with $d=7$.

\subsection{Permutations that avoid an increasing subsequence of length three and end with a descent}

The following result was first established in \cite{Kitaev}, and corresponds to $d=2$ in Corollary 5.
\begin{theorem}
Let $a_n$ be the number of permutations $\pi \in S_n(1\dd 2\dd 3)$ which end in a descent (i.e. $\pi_{n-1}>\pi_n$).
Then
$$ \sum_{n\geq2} a_n x^n \;=\; x^2 C(x)^3 \;=\; \sum_{n\geq 2} \frac{3}{n+1} {2n-2 \choose n} x^n.$$
\end{theorem}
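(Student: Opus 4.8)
The plan is to avoid working with descent-ending permutations directly and instead count the complementary set---those $\pi \in S_n(1\dd 2\dd 3)$ ending in an \emph{ascent}---and subtract from the Catalan number $C_n = |S_n(1\dd 2\dd 3)|$. Concretely, I aim to prove that the number of ascent-ending $(1\dd 2\dd 3)$-avoiders of length $n$ is exactly $C_{n-1}$, so that $a_n = C_n - C_{n-1}$. Granting this, $a_n$ is precisely the number of Dyck paths of semilength $n$ that decompose into at least two components, the indecomposable (or ``prime'') paths $uQd$ of semilength $n$ being counted by $C_{n-1}$; this is exactly the class $\mathcal{D}_{1,1}$, and Theorem 4 with $k=p=1$, i.e.\ $d=2$ in Corollary 5, then delivers both the generating function $x^{2}C(x)^{3}$ and the coefficient $\tfrac{3}{n+1}\binom{2n-2}{n}$ for free.

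The heart of the argument is a structural lemma: if $\pi \in S_n(1\dd 2\dd 3)$ ends in an ascent, then $\pi_{n-1} = 1$. Indeed, were there an index $i < n-1$ with $\pi_i < \pi_{n-1}$, the triple $\pi_i < \pi_{n-1} < \pi_n$ would be an occurrence of $(1\dd 2\dd 3)$; hence $\pi_{n-1}$ is smaller than every earlier entry, and since $\pi_{n-1} < \pi_n$ as well, $\pi_{n-1}$ is the global minimum, so $\pi_{n-1}=1$. With this in hand I would set up the bijection $\Phi$ from the ascent-ending members of $S_n(1\dd 2\dd 3)$ to $S_{n-1}(1\dd 2\dd 3)$ that simply deletes the entry $1$ sitting in position $n-1$ and standardises the result. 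Deleting an entry cannot create a forbidden pattern, so the image avoids $(1\dd 2\dd 3)$. The inverse adds $1$ to every entry of a $\sigma \in S_{n-1}(1\dd 2\dd 3)$ and reinserts a $1$ in the penultimate position; the reinserted $1$ has only a single entry to its right, so it can begin an increasing run of length at most two and can never be the smallest element of a $(1\dd 2\dd 3)$, while any pattern not using it would already live in $\sigma$. Thus $\Phi$ is a bijection and the ascent-ending count is $C_{n-1}$.

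Once $a_n = C_n - C_{n-1}$ is established, the remaining bookkeeping is routine. Writing $C(x) = \sum_{n\ge 0} C_n x^n$ and splitting off the low-order terms gives $\sum_{n\ge 2} a_n x^n = C(x) - 1 - x\,C(x)$, and substituting the Catalan functional equation $C(x) = 1 + x\,C(x)^2$ (equivalently $C(x)-1 = x\,C(x)^2$) collapses this to $x\,C(x)\bigl(C(x)-1\bigr) = x^{2}C(x)^{3}$, as claimed; the final equality in the statement is then just the $d=2$ instance of the coefficient formula already supplied by Theorem 4 and Corollary 5. The main obstacle, and the only place where genuine combinatorial insight is required, is the structural lemma together with the verification that reinserting the $1$ preserves $(1\dd 2\dd 3)$-avoidance; everything downstream is forced by the Catalan recursion and the already-proved Theorem 4.
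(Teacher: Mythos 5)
Your proposal is correct and follows essentially the same route as the paper: both hinge on the observation that a $(1\dd 2\dd 3)$-avoider ends in an ascent precisely when $1$ occupies the penultimate position, giving $a_n = C_n - C_{n-1}$, and both then collapse the generating function via $C(x) = 1 + xC(x)^2$ to $x^2C(x)^3$. Your explicit deletion/reinsertion bijection and the appeal to Corollary 5 for the coefficient are just slightly more formal packaging of the paper's argument.
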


\begin{proof}
We know that the number of permutations of length $n$ without the latter restriction is given by the $nth$ Catalan number $C_n$. If a permutation $\pi$ avoids the pattern $(1\dd 2\dd 3)$, we see that if the smallest element is not in the rightmost or second rightmost position in $\pi$, then $\pi$ must end with a descent, or else we would have an occurrence of a $(1\dd 2\dd 3)$ pattern consisting of $1$ and the last two elements in the permutation. We also note that if $1$ is in the last position in $\pi$, then it must necessarily end with a descent. Therefore, the only way $\pi$ can avoid the pattern $(1\dd 2\dd 3)$ and not end with a descent is if $1$ is in the second last position. If we keep the smallest element fixed next to the last place in the permutation we have $n-1$ elements left to rearrange such that they avoid a $(1\dd 2\dd 3)$ pattern, which we can do in $C_{n-1}$ ways. Therefore, the number of permutations that avoid $(1\dd 2\dd 3)$ and end with a descent is given by $C_n-C_{n-1}$. 

Recall that we denote the generating function for the Catalan numbers by $C(x)$. One of the properties of this function is that it satisfies the identity $C(x)=1+xC(x)^2$. When we translate $C_n-C_{n-1}$ into generating functions, we can take advantage of this fact to obtain

\begin{eqnarray*}
 &x^{-1}(C(x)-xC(x)-1) \\
=&x^{-1}(C(x)-1)-C(x) \\
=&C(x)^2-C(x) \\
=&C(x)(C(x)-1) \\
=&xC(x)^3. \\
\end{eqnarray*}

We can then use standard methods to extract the $nth$ coefficient of the expansion of this function. \end{proof}

\subsection{Permutations with a single increasing subsequence of length three, where two elements are adjacent}

The generating function and counting formula in the following result correspond to $d=4$ in Corollary 5, where we divide the generating function by $x$, and replace $n$ with $n+1$ to provide a shift of $1$.

\begin{theorem}

Let $b_n$ be the number of permutations $\pi \in S_n^1 (1\dd 23)$ for which $(1\dd 2\dd 3)\pi \linebreak =1$. Then
$$\sum_{n\geq3} b_n x^n \;=\; x^3 C(x)^5 \;=\; \sum_{n\geq 3} \frac{5}{n+2} {2n-2 \choose n+1} x^n.$$\end{theorem}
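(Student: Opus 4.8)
The plan is to mirror the strategy of Theorem 7: first pin down the precise combinatorial shape of the permutations counted by $b_n$, then translate that shape into a decomposition whose generating function collapses to $x^3C(x)^5$ via the Catalan identity $C(x)=1+xC(x)^2$. Before anything else I would record the reformulation of the hypotheses. A permutation counted by $b_n$ has exactly one occurrence of the classical pattern $(1\dd 2\dd 3)$, and since every $(1\dd 23)$ occurrence is a special $(1\dd 2\dd 3)$ occurrence, requiring in addition $(1\dd 2\dd 3)\pi=1$ forces that single increasing triple to have its ``$2$'' and ``$3$'' in consecutive positions. It is also worth recording at the outset that, by Corollary 5 and Theorem 6, the asserted identity $\sum_{n\ge 3}b_nx^n=x^3C(x)^5$ is equivalent to $b_n$ equalling the number of tableaux in $\mathcal{Y}_4$ of semisize $n+1$; so it suffices to show that $b_n$ obeys the same recursion, with matching initial conditions, as that shifted $\mathcal{Y}_4$-sequence.

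Next I would establish the structural description. Using the standard fact that $\pi$ avoids $(1\dd 2\dd 3)$ if and only if its non-left-to-right-minima form a decreasing subsequence, I would show that a permutation counted by $b_n$ is exactly one whose non-minima are strictly decreasing except for a single ascent $y<z$ that is moreover adjacent in $\pi$, and for which exactly one left-to-right minimum smaller than $y$ occurs before $y$. The point is that the top two entries of any increasing triple must be non-minima forming an increasing pair, so ``exactly one triple'' forces exactly one increasing pair of non-minima (hence one adjacent ascent) together with exactly one smaller left-to-right minimum preceding it; each such minimum below $y$, paired with $(y,z)$, would otherwise yield a distinct triple. This is the description I would then feed into a decomposition.

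From here I would set up a recursion by conditioning on the location of the defect $(y,z)$: remove the entry $z$ to land in a $(1\dd 2\dd 3)$-avoider, and record the reinsertion data (the position of $z$ and the identity of the forced minimum $x<y$). The block to the left of $x$, the block between $x$ and the adjacent pair, and the block to the right of $z$ should each be unconstrained $(1\dd 2\dd 3)$-avoiders, contributing independent Catalan factors, while the requirement that no \emph{second} triple be created couples the middle portion in precisely the way that produced the ``$d=2$'' contribution in Theorem 7. I expect the resulting convolution to assemble into $x^3C(x)^5$ after one application of $C(x)=1+xC(x)^2$, exactly paralleling the final manipulation in Theorem 7; equivalently, one checks that $b_n$ reproduces the recursion of the shifted $\mathcal{Y}_4$-class.

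The main obstacle will be enforcing the ``exactly one occurrence'' condition simultaneously with the adjacency condition, without over- or under-counting. A single ascent among the non-minima can spawn many increasing triples, so the heart of the argument is verifying that the ``exactly one left-to-right minimum below $y$'' condition is both necessary and sufficient, and that removing and reinserting $z$ neither destroys the uniqueness of the triple nor manufactures spurious triples across the glued blocks. Getting this bookkeeping right, and checking the base cases $n=3,4$ against $b_3$ and $b_4$ to lock the initial conditions of the recursion, is where the real work lies; the closing generating-function computation should then be routine.
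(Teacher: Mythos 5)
Your setup is sound: the reformulation (the unique $(1\dd 2\dd 3)$-occurrence must coincide with the unique $(1\dd 23)$-occurrence, so its ``2'' and ``3'' are adjacent in position) is correct, and your structural characterization via left-to-right minima is equivalent to what the paper derives directly (every entry to the right of $c$ is smaller than $b$, and the only entry to the left of $b$ smaller than $b$ is $a$). But the proposal stops exactly where the proof has to happen, and the one concrete counting claim you make is false. Writing $\pi = A\,a\,B\,b\,c\,D$, where $a<b<c$ is the unique triple and $b,c$ are adjacent, the three blocks $A$, $B$, $D$ are \emph{not} ``unconstrained $(1\dd 2\dd 3)$-avoiders contributing independent Catalan factors.'' Uniqueness of the triple forces $B$ to be strictly decreasing with every entry larger than $c$ (an ascent inside $B$, or an entry of $B$ below $c$, creates a second triple together with $a$); it forces the concatenation $A\,B\,c$ to avoid $(1\dd 2\dd 3)$ \emph{jointly}, so $A$ is coupled to $B$ and $c$; and it forces $a\,D$, not just $D$, to avoid $(1\dd 2\dd 3)$. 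So the count is not a three-fold Catalan convolution, and the appeal to a ``coupling in the $d=2$ way'' is never specified. You acknowledge that enforcing the exactly-one-occurrence bookkeeping is ``where the real work lies,'' but that work is the theorem; likewise your fallback route, matching a recursion against the shifted $\mathcal{Y}_4$ class, is not carried out (no recursion is derived for either side, and the paper supplies none to borrow).

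For comparison, the paper resolves precisely this difficulty with a bijection rather than a recursion: replace $a$ by $b$ and move $a$ to the position immediately after $c$. This turns $\pi$ into a concatenation $LR$ in which $L$ carries the values $b,\ldots,n$, $R$ carries $1,\ldots,b-1$, both components avoid $(1\dd 2\dd 3)$, and the minimum of $L$ is not its rightmost entry. All of the tangled constraints above are thereby packaged into a clean two-factor count $(C_{n-i+1}-C_{n-i})\,C_{i-1}$, summed over $i$ from $2$ to $n-1$, which the identity $C(x)=1+xC(x)^2$ collapses to $x^3C(x)^5$. If you want to salvage your approach, the repair is essentially to rediscover this: either prove the rigid structure of $B$ and the joint constraints stated above and sum the resulting (two-factor, not three-factor) convolution, or construct the paper's bijection outright.
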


\begin{proof}
We will exhibit a bijection between the class of permutations we want to count and another class of permutations that is more easily enumerated.

Let $\pi \in S_n^1(1\dd 23)$ with $(1\dd 2\dd 3)\pi = 1$. Let the single occurrence of the pattern in the permutation consist of $\pi_j=a, \pi_k=b$ and $\pi_{k+1}=c$, with $j\leq k$, we see that since there is no other occurrence of $(1\dd 2\dd 3)$ in $\pi$, all the elements to the right of $c$ must be smaller than $b$, or we would have more than one occurrence of a $(1\dd 2\dd 3)$ pattern. Likewise, we see that there can only be one element to the left of $b$ that is smaller than $b$, namely $a$ or we would once again have more than one occurrence of a $(1\dd 2\dd 3)$ pattern. Our bijection consists of replacing $a$ with $b$, and placing $a$ immediately to the right of $c$. We now have a permutation decomposable into a left and right component, which we shall refer to hereafter as $L$ and $R$ respectively, with the left component containing the values $b,b+1,\ldots,n$, and the right component containing the values $1,2,\ldots,b-1$. Furthermore, we see that $b$ cannot be the rightmost element in $L$ because $c$ has not been moved, and $b$ can only have been moved further to the left. In symbols, we would describe this as the permutation

$$\ell_1,\ell_2,\ldots,a,\ldots,b,c,r_1,r_2,\ldots,r_{b-2},$$ becoming the permutation $$\ell_1,\ell_2,\ldots,b,\ldots,c,a,r_1,r_2,\ldots,r_{b-2},$$ where $\ell_k\in L$, $r_k\in R$.

We can see that this is a bijection by describing the inverse function. Let $\pi$ be a permutation of length $n$ that is decomposable into two components, $L$ and $R$, such that the elements $1,2,\ldots,b-1$ are in $R$ and the elements $b,b+1,\ldots,n$ are in $L$. Furthermore, let us require that both $L$ and $R$ avoid the $(1\dd 2\dd 3)$ pattern, and that $b$ is not the rightmost element of $L$. Let us refer to the set of these permutations as $J_{n,b}$. The inverse of the function we described earlier on $J_{n,b}$ would then be to replace $b$ in $L$ with the rightmost element $r_\ell$ in $R$, and placing $b$ immediately to the left of the rightmost element $\ell_r$ in $L$.

We will now count the elements in $J_{n,i}$. Let $\pi \in J_{n,i}$, so the elements $1,2,\ldots,i-1$ are in $R$, and the elements $i,i+1,\ldots,n$ are in $L$. We can denote $\pi$ as the concatenation $LR$. We know that the number of permutations of length $n$ that avoid the pattern $(1\dd 2\dd3)$ is given by the $nth$ Catalan number, $C_n$. Therefore, the number of possible ways we can arrange the elements in $R$, in such a way that $R$ avoids $(1\dd 2\dd 3)$, is given by $C_{i-1}$. The component $L$ is similar except $i$ cannot be the rightmost element. Without such a restriction, the number of ways we could arrange the elements in $L$ would be given by $C_{n-i+1}$. From this number we must subtract $C_{n-i}$, the number of all permutations of length $n-i+1$ such that the lowest element is fixed in the rightmost position. Hence, the total number of possible permutations for a given $i$ is $(C_{n-i+1}-C_{n-i})C_{i-1}$. To obtain the total number of permutations, we sum over all possible $i$, which cannot be smaller than $2$, and no greater than $n-1$, to ensure that both $L$ and $R$ are nonempty and satisfy the restrictions. The number is therefore given by

\begin{equation}
 \sum_{i=2}^{n-1} (C_{n-i+1}-C_{n-i})C_{i-1}.
\end{equation} 

By using basic algebra and the fact that the Catalan function $C(x)$ satisfies $C(x)=1+xC(x)^2$, we obtain the following when we consider the corresponding generating function for (2).

\begin{eqnarray*}
 &\sum_{n \geq 3}{\sum_{i=2}^{n-1}(C_{n-i+1}-C_{n-i})C_{i-1}x^n} \\
=&x^{-1}(C(x)-xC(x))xC(x)-2C(x)+xC(x)+1 \\
=&x^3C(x)^5.
\end{eqnarray*}

The part of the summand that reads $-2C(x)+xC(x)$ in the initial expression above stems from the range of $i$ being limited. Referring back to the bivariate generating function in Corollary 5, we see that if we put $d=4$ and divide the function with $x$, we get $x^3C(x)^5$ as well. We then replace $n$ with $n+1$ in the associated counting formula to obtain the stated results.\end{proof}

\subsection{Permutations containing a single occurrence of an increasing subsequence of length three}

We are now equipped to handle the case we mentioned earlier, that is, to find the number of permutations that have a single instance of a $(1\dd 2\dd 3)$ pattern, without any additional restrictions. The result was first established in \cite{Noonan}, and later in \cite{Little}. The generating function and counting formula in the following theorem correspond to $d=5$ in Corollary 5, where we divide the generating function with $x^2$, and replace $n$ with $n+2$ in the counting formula to produce a shift of $2$.
 
\begin{theorem}
Let $c_n$ be the number of permutations $\pi \in S_n^1 (1\dd 2\dd 3)$. Then
$$\sum_{n\geq3} c_n x^n \;=\; x^3 C(x)^6 \;=\; \sum_{n\geq 3} \frac{6}{n+3}{2n-1 \choose n+2} x^n.$$
\end{theorem}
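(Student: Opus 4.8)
The plan is to follow the template of Theorem 8: first pin down the rigid structure forced on a permutation by having \emph{exactly} one increasing subsequence of length three, and then read off a convolution whose generating function collapses to $x^3C(x)^6$ via the Catalan identity $C(x)=1+xC(x)^2$.

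First I would prove a structure lemma. Let $\pi\in S_n^1(1\dd 2\dd 3)$ and let $\pi_p=a$, $\pi_q=b$, $\pi_r=c$ (with $p<q<r$ and $a<b<c$) be its unique occurrence; uniqueness makes the triple $(a,b,c)$ well defined. The two observations driving everything are: (i) any element exceeding $b$ that lies to the right of position $q$ would, together with $a$ and $b$, create a second $(1\dd 2\dd 3)$, so $c$ is the \emph{only} element larger than $b$ occurring after $b$; and (ii) any element smaller than $b$ lying to the left of position $q$ would, with $b$ and $c$, create a second occurrence, so $a$ is the \emph{only} element smaller than $b$ occurring before $b$. Hence the elements $\{b+1,\dots,n\}\setminus\{c\}$ all sit before $b$ and the elements $\{1,\dots,b-1\}\setminus\{a\}$ all sit after $b$; in particular $b$ occupies position $n-b+1$. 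This is the exact analogue of the ``only one small element to the left of $b$'' observation used for $d=4$.

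The counting step is then to compare $\pi$ with the $d=4$ situation, where $c$ sits immediately after $b$. The elements strictly between $b$ and $c$ form a block $M$ (all smaller than $b$), and I would argue that excising $M$ and sliding $c$ next to $b$ produces a permutation in the class enumerated by Theorem 8, while $M$ together with its placement data is recorded by a free $(1\dd 2\dd 3)$-avoiding factor. Equivalently, one organises the count as $c_n=\sum_m C_{n-m}\,b_m$, which at the level of generating functions reads $\bigl(\sum_n c_nx^n\bigr)=C(x)\cdot x^3C(x)^5=x^3C(x)^6$; alternatively one can write the count directly as a sum over the value $b$ and the size of $M$, mirroring the sum $\sum_{i=2}^{n-1}(C_{n-i+1}-C_{n-i})C_{i-1}$ of Theorem 8 and then resumming with $C(x)=1+xC(x)^2$. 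Finally, matching against Corollary 5 with $d=5$, dividing by $x^2$ and replacing $n$ by $n+2$ yields the closed formula $\frac{6}{n+3}\binom{2n-1}{n+2}$.

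The main obstacle is the bookkeeping that keeps the occurrence \emph{unique} throughout the decomposition. When the block $M$ is reinserted (or when the summation ranges are truncated at their endpoints), one must check that no new increasing triple is created among the small elements of $M$, the large elements sitting before $b$, and the element $c$; this is exactly where spurious patterns such as $(a,s,c)$ with $s\in M$ threaten to appear, and controlling them is what forces the boundary corrections analogous to the $-2C(x)+xC(x)$ term in the proof of Theorem 8. Verifying that these corrections are precisely what turns the raw convolution into $x^3C(x)^6$ is the crux of the argument; once it is in place, extracting the $n$th coefficient is routine.
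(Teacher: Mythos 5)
Your structure lemma is correct (it is essentially the same pair of observations the paper makes), but the counting step contains a genuine gap: the block $M$ of entries lying strictly between $b$ and $c$ is \emph{not} a free $(1\dd 2\dd 3)$-avoiding factor, and the excision map you describe does not realize the convolution $c_n=\sum_m C_m\,b_{n-m}$. Uniqueness of the occurrence forces far more rigidity on $M$ than mere avoidance: any increasing pair $m<m'$ inside $M$ would combine with $c$ into a second occurrence $(m,m',c)$, so $M$ must be \emph{decreasing} (for $|M|=2$ the pattern $12$ can never occur, so the factor $C_2=2$ is already wrong); any $m\in M$ with $m>a$ yields the second occurrence $(a,m,c)$, so every entry of $M$ must be smaller than $a$; and entries of $M$ interact with the suffix $R'$ to the right of $c$, since $m\in M$ lying below an increasing pair of $R'$ gives yet another occurrence. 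Because of this, the number of admissible ways to insert a block into a given Theorem 8 permutation $\pi'$ depends on $\pi'$ itself, and the fibers of your map are not uniform. Concretely, take $n=5$: there are $c_5=27$ permutations, of which $b_5=20$ have $b,c$ adjacent, so exactly seven have $M\neq\emptyset$. Six of them have $|M|=1$, namely $34152$, $34251$, $24153$, $42315$, $25314$, $52314$, and only one has $|M|=2$, namely $34215$; your decomposition predicts $C_1b_4=5$ and $C_2b_3=2$. The failure is visible directly: $34152$ and $34251$ both excise to $\pi'=2341$, so that fiber has size two while others have size one.

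Thus the identity $c_n=\sum_m C_m b_{n-m}$ is true only in aggregate, and only because it is equivalent to the generating-function identity you are trying to prove; the proposed bijection does not establish it, and you in effect concede this when you defer ``verifying that these corrections are precisely what turns the raw convolution into $x^3C(x)^6$'' as the crux. The paper closes this gap with a different decomposition: a bijection $\tau$ that swaps $a$ and $b$, moves $c$ next to $a$, inserts a second copy of $b$ at the old position of $c$, and increments the prefix, landing on permutations of length $n+1$ that split as $LR$ with $L$ carrying the values $b+1,\dots,n+1$ and $R$ the values $1,\dots,b$, both blocks $(1\dd 2\dd 3)$-avoiding, the minimum of $L$ not rightmost in $L$, and the maximum of $R$ not leftmost in $R$. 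This yields the two-sided restricted convolution $\sum_{i=2}^{n-1}(C_{n-i+1}-C_{n-i})(C_i-C_{i-1})$ --- both factors are ``Catalan minus Catalan,'' rather than one restricted factor times a free Catalan factor --- and that sum collapses to $x^3C(x)^6$. Repairing your argument would require replacing the free factor $C(x)$ by such a restricted factor, which in effect reproduces the paper's proof.
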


\begin{proof}

We will proceed as we did for Theorem 8, that is, find a bijection between our class of permutations and another class that is easier to enumerate.

Let $\pi \in S_n^1 (1\dd 2\dd 3)$. Let the single occurrence of the pattern $(1\dd 2\dd 3)$ in the permutation consist of $\pi_j=a, \pi_k=b$ and $\pi_{m}=c$, with $j\leq k \leq m $. We see that since there is no other occurrence of $(1\dd 2\dd 3)$ in $\pi$, all the elements to the right of $c$ must be smaller than $b$, or we would have more than one occurrence of a $(1\dd 2\dd 3)$ pattern. Also, we see that the only element to the left of $b$ that is smaller than $b$ is $a$ or else we would have more than one occurrence of a $(1\dd 2\dd 3)$ pattern. Our bijection, which we shall refer to as $\tau$, is as follows. Place $a$ in the position of $b$, place $b$ in the original position of $a$, place $c$ immediately to the left of $a$, and insert a new $b$ into the original location of $c$ (so at this point we have two instances of $b$ in the permutation). Finally, increment all the values to the left of $a$ by one. At this point we have to the left of $a$ the elements $b+1,\ldots,n+1$, and to the right of $a$ we have the elements $1,\ldots,a-1,a+1,\ldots,b$. In other words, we now have a permutation decomposable into a left and right component, which we shall refer to hereafter as $L$ and $R$ respectively, with the left component containing the values $b+1,b+2,\ldots,n+1$, and the right component containing the values $1,2,\ldots,b$ (so $a$ belongs to $R$). We see that $b+1$ cannot be the rightmost element in $L$ because the bijection reserves that position for $c+1$. In addition, we see that $b$ cannot be the leftmost element in $R$ since the bijection reserves that position for $a$. In symbols, we would denote this as

$$\ell_1,\ell_2,\ldots,a,\ldots,b,r_1,r_2,\ldots,c,\ldots,r_{b-2},$$ becoming the permutation
$$\ell_1+1,\ell_2+1,\ldots,b+1,\ldots,c,a,r_1,r_2,\ldots,b,\ldots,r_{b-2},$$ where $\ell_k\in L$, $r_k\in R$.

We can see that this is a bijection by showing that the inverse function is well defined. Let $\pi$ be a permutation of length $n+1$ that is decomposable into a left and right component, $L$ and $R$ respectively, such that the elements $1,\ldots,b$ are in $R$ and the elements $b+1,\ldots,n+1$ are in $L$. Furthermore, let us require that both $L$ and $R$ avoid the $(1\dd 2\dd 3)$ pattern, that $b+1$ is not the rightmost element of $L$, and that $b$ is not the leftmost element of $R$. Let us refer to the set of these permutations as $J'_{n+1,b}$. The inverse of the function we described earlier would then be to decrease all the numbers in $L$, and swap the smallest element, $b+1$, in $L$ with the leftmost element, $r_\ell$, in $R$. Lastly, we would replace $b$ in $R$ with the rightmost element, $\ell_r$, in $L$. 

We now proceed to enumerate $J'_{n+1,i}$. Let $\pi \in J'_{n+1,i}$, and let the decomposition be in such a way that the elements $1,2,\ldots,i$ are in $R$ and $i+1,i+2,\ldots,n+1$ are in $L$. In the proof of Theorem 8 we used the fact that the number of permutations of length $n$ that avoid the pattern $(1\dd 2\dd 3)$ is given by the $nth$ Catalan number, $C_n$. So, the total number of ways we can permute the elements in $L$ is given by $C_{n-i+1}-C_{n-i}$, where we  subtract the number of ways we can arrange the elements in $L$ with $i+1$ fixed in the rightmost position in $L$, since we have excluded all such permutations from $J'_{n+1,i}$. Likewise, we see that the total number of ways we can permute the elements in $R$ is given by $C_{i}-C_{i-1}$, where we subtract the number of ways we can arrange the elements in $R$ with $i$ being in the leftmost position, since we have also excluded those permutations from $J'_{n+1,i}$. Since $\pi$ is simply the concatenation $LR$, we see that the total number of elements in $J'_{n+1,i}$ is given by $(C_{n-i+1}-C_{n-i})(C_i-C_{i-1})$. We then sum over $i \in \left\{2,\ldots,n-1\right\}$;

\begin{equation}
 \sum_{i=2}^{n-1} (C_{n-i+1}-C_{n-i})(C_i-C_{i-1}).
\end{equation}

We recall that the Catalan function $C(x)$ satisfies $C(x)=1+xC(x)^2$, and so when we translate (3) into a generating function we get the following.

\begin{eqnarray*}
 &\sum_{n \geq 3}{\sum_{i=2}^{n-1}{(C_{n-i+1}-C_{n-i})(C_i-C_{i-1})}x^n} \\
=&x^{-1}(C(x)-xC(x))(C(x)-xC(x))-2(C(x)-xC(x)) \\
=&x^3C(x)^6. \\
\end{eqnarray*}

We can then extract the $nth$ coefficient by standard means, to produce the counting formula in Theorem 9.\end{proof}

\subsection{Permutations with a single occurrence of an increasing subsequence of length three, with two elements nonadjacent.}

The generating function and counting formula in the following theorem correspond to $d=6$ in Corollary 5, where we divide the generating function by $x^2$, and replace $n$ with $n+2$ to produce a shift of $2$.

\begin{theorem}
Let $d_n$ be the number of permutations $\pi \in S_n^1 (1\dd 2\dd 3)$ for which $(1\dd 23)\pi=0$. Then
$$\sum_{n\geq4} d_n x^n \;=\; x^4 C(x)^7 \;=\; \sum_{n\geq 4} \frac{7}{n+3}{2n-2 \choose n+2}x^n.$$

\end{theorem}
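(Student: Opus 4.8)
The plan is to avoid constructing a new bijection from scratch and instead derive this count directly from the two theorems already proved, Theorems 8 and 9. The key structural observation is that every occurrence of $(1\dd 23)$ is in particular an occurrence of $(1\dd 2\dd 3)$, so for every permutation $\pi$ we have $(1\dd 23)\pi \leq (1\dd 2\dd 3)\pi$. Restricting to the class $S_n^1(1\dd 2\dd 3)$ of permutations with exactly one increasing triple, the statistic $(1\dd 23)\pi$ can therefore take only the values $0$ or $1$. This splits $S_n^1(1\dd 2\dd 3)$ into two disjoint pieces: those $\pi$ whose unique triple $a<b<c$ has its two largest entries $b,c$ in non-adjacent positions, so that $(1\dd 23)\pi = 0$, and those whose unique triple has $b,c$ adjacent, so that $(1\dd 23)\pi = 1$.

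First I would identify each piece with a class that has already been enumerated. The first piece is by definition the class counted by $d_n$. The second piece consists of the permutations of length $n$ with exactly one increasing triple whose two largest entries are adjacent; this is precisely the set $\{\pi \in S_n^1(1\dd 23) : (1\dd 2\dd 3)\pi = 1\}$ counted by $b_n$ in Theorem 8. Indeed, the adjacency furnishes one occurrence of $(1\dd 23)$, and there can be no second one, since any further occurrence would supply a second increasing triple. Hence the partition yields
$$ c_n = d_n + b_n, \qquad\text{equivalently}\qquad d_n = c_n - b_n. $$

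Passing to generating functions and invoking Theorems 8 and 9, I would then compute
$$ \sum_{n\geq4} d_n x^n \;=\; \sum_{n\geq3} c_n x^n - \sum_{n\geq3} b_n x^n \;=\; x^3 C(x)^6 - x^3 C(x)^5 \;=\; x^3 C(x)^5\bigl(C(x)-1\bigr). $$
The defining identity $C(x) = 1 + xC(x)^2$ gives $C(x)-1 = xC(x)^2$, so the right-hand side collapses to $x^4 C(x)^7$, as claimed. The lower summation bound rises to $n\geq4$ because the shortest permutation admitting a non-adjacent triple has length $4$; consistently, $c_3 - b_3 = 1 - 1 = 0 = d_3$.

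For the closed formula I would extract the coefficient of $x^n$ in $x^4 C(x)^7 = x^{-2}\bigl(x^6 C(x)^7\bigr)$. The factor $x^6 C(x)^7$ is exactly the generating function of Theorem 4 with $k+p=d=6$, whose $m$th coefficient is $\tfrac{7}{m+1}\binom{2m-6}{m}$; taking $m=n+2$ to absorb the $x^{-2}$ shift gives $\tfrac{7}{n+3}\binom{2n-2}{n+2}$, matching the statement. The only real obstacle I anticipate is the bookkeeping in the paragraph identifying the ``adjacent triple'' piece with the class of Theorem 8 and in matching the index shift; once the partition $c_n = d_n + b_n$ is justified, the rest is immediate. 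Should a self-contained argument be preferred over this subtraction, one could instead mirror the proof of Theorem 9, refining the bijection $\tau$ so as to force at least one entry strictly between $b$ and $c$; the resulting nonempty intermediate block contributes the extra factor $xC(x)^2 = C(x)-1$ that carries $x^3C(x)^5$ to $x^4C(x)^7$.
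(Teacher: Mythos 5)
Your proof is correct, and it takes a genuinely different route from the paper's. The paper re-runs the bijective machinery of Theorem 9: it applies the map $\tau$ to the permutations in question, observes that non-adjacency of $b$ and $c$ translates into the extra condition that $b$ cannot be the second-leftmost entry of the right component $R$, counts the images for each splitting value $i$ as $(C_{n-i+1}-C_{n-i})(C_i-2C_{i-1})$, and then sums and simplifies a generating function with several boundary terms (a computation delicate enough that the paper's displayed sum (4) and the text preceding it disagree by an index shift). You instead note that on $S_n^1(1\dd 2\dd 3)$ the statistic $(1\dd 23)$ can only take the values $0$ and $1$, identify the value-$1$ piece with the class of Theorem 8 (both inclusions follow at once from the fact that every $(1\dd 23)$ occurrence is a $(1\dd 2\dd 3)$ occurrence), and conclude $d_n = c_n - b_n$; then $C(x)-1 = xC(x)^2$ collapses $x^3C(x)^6 - x^3C(x)^5$ to $x^4C(x)^7$, and your coefficient extraction via Theorem 4 with $d=6$ and the shift $m=n+2$, together with the check $d_3 = c_3 - b_3 = 0$ explaining the bound $n\geq 4$, is correct. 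What each approach buys: yours is shorter, avoids the summation bookkeeping entirely, and makes the relation among Theorems 8--10 transparent as an inclusion--exclusion; the paper's is self-contained and, more importantly, establishes the restricted component count $C_i - 2C_{i-1}$ that its proof of Theorem 11 explicitly reuses, whereas your route would need a further ingredient to reach Theorem 11 (e.g.\ a two-sided inclusion--exclusion, using reverse-complement symmetry for the $(12\dd 3)$ case and a count of permutations whose unique triple is fully adjacent).
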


\begin{proof}

Our proof will be nearly identical to the one we constructed for Theorem 9, with the exception that an extra restriction is required for $L$, to eliminate adjacency. 

Let $\pi \in S_n^1(1\dd 2\dd 3)$ with $(1\dd 23)\pi = 0$. Let the single occurrence of the pattern in the permutation consist of $\pi_j=a, \pi_k=b$ and $\pi_{m}=c$, with $j\leq k \leq m-1 $, we see that since there is no other occurrence of $(1\dd 2\dd 3)$ in $\pi$, all the elements to the right of $c$ must be less than $b$. Also, we see that the only element to the left of $b$ that is smaller than $b$ is $a$. We will now apply the bijection $\tau$ we introduced in the proof of Theorem 9. In short, we place $a$ in the position of $b$, place $b$ in the original position of $a$, place $c$ immediately to the left of $a$, and insert a new $b$ into the original location of $c$. Lastly, we increment all the values to the left of $a$ by one. Our permutation now has the familiar structure, $LK$, with $L$ consisting of the elements $b+1,b+2,\ldots,n+1$, and $R$ consisting of the elements $1,2,\ldots,b$. In symbols, we have the permutation

$$\ell_1,\ell_2,\ldots,a,\ldots,b,r_1,r_2,\ldots,c,\ldots,r_{b-2},$$ transformed into the permutation
$$\ell_1+1,\ell_2+1,\ldots,b+1,\ldots,c,a,r_1,r_2,\ldots,b,\ldots,r_{b-2},$$

Let us denote the set of all the latter permutations as $J''_{n+1,b}$. There are several observations to be made here. As before, $b+1$ cannot be the rightmost element in $L$ because the bijection reserves that position for $c+1$, and $b$ cannot be the leftmost element in $R$ since the bijection reserves that position for $a$. In addition, $b$ cannot be the second leftmost element in $R$ because none of the permutations in the domain of the bijection here have $b$ and $c$ adjacent, so once $a$ is added to $R$, there are at least two elements in front of the new $b$ in $R$. 

We now determine the cardinality of $J''_{n+1,i}$. In the proof of Theorem 8, we demonstrated that the number of ways in which we can permute the elements of $L$ is given by $C_{n-i+1}-C_{n-i}$. Without the extra restriction we have placed on $R$ in this proof, the number of ways we can permute the elements in $R$ is likewise shown to be $C_{i}-C_{i-1}$. Since we have disallowed the possibility of having $i$ as the second leftmost element in $R$, we note that $i$ is the largest element in the component and as such cannot possibly contribute to any instances of a $(1\dd 2\dd 3)$ pattern if it remains in the leftmost, or second leftmost, position, and so the number of permutations of length $i$ that avoid the pattern $(1\dd 2\dd 3)$, with $i$ being neither the leftmost nor the second leftmost element, is given by $C_{i}-2C_{i-1}$, where we have subtracted $C_{i-1}$ twice, once for each forbidden location of $i$. The number of elements in $J''_{n+1,i}$ is thus given by $(C_{n-i+1}-C_{n-i})(C_i-2C_{i-1})$. We now sum over $i \in \left\{3,\ldots,n-1\right\}$ and find the number to be

\begin{equation}
 \sum_{i=3}^{n-1} (C_{n-i}-C_{n-i-1})(C_i-2C_{i-1}).
\end{equation}

We remember that the Catalan function $C(x)$ satisfies $C(x)=1+xC(x)^2$, and thus obtain the following when we translate (4) into a generating function.

\begin{eqnarray*}
 &\sum_{n \geq 4}{\sum_{i=3}^{n-1} (C_{n-i}-C_{n-i-1})(C_i-2C_{i-1})x^n} \\
=&x^{-1}(C(x)-xC(x))(C(x)-2xC(x))+(C(x)-xC(x))\\
 &-(x^{-1}C(x)-C(x))-(x^{-1}C(x)-2C(x))-1+x^{-1} \\
=&x^4C(x)^7. \\
\end{eqnarray*}

We can then extract the $nth$ coefficient by standard means, to produce the counting formula in Theorem 10.\end{proof}

\subsection{Permutations with a single increasing subsequence of length three, with no two elements adjacent.}

The generating function and counting formula in the theorem below correspond to $d=7$ in Corollary 5, where we divide the generating function by $x^2$, and replace $n$ with $n+2$ to produce a shift of $2$.

\begin{theorem}
Let $f_n$ be the number of permutations $\pi \in S_n^1 (1\dd 2\dd 3)$ for which $(1\dd 23)\pi=0$ and $(12\dd 3)\pi=0$. Then
$$\sum_{n\geq5} f_n x^n \;=\; x^5 C(x)^8 \;=\; \sum_{n\geq 5} \frac{8}{n+3}{2n-3 \choose n+2}x^n.$$
\end{theorem}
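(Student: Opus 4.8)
The plan is to mimic the proofs of Theorems~9 and~10, reusing the bijection $\tau$ and grafting on one further restriction to absorb the second vanishing condition. First I would record what the two hypotheses say about a permutation $\pi \in S_n^1(1\dd 2\dd 3)$ whose unique increasing triple $a<b<c$ sits at positions $j<k<m$. Since both $(1\dd 23)$ and $(12\dd 3)$ are refinements of $(1\dd 2\dd 3)$, any occurrence of either is in particular an occurrence of $(1\dd 2\dd 3)$, hence must be the triple $a,b,c$ itself. Therefore $(1\dd 23)\pi=0$ is exactly the statement that $b$ and $c$ are nonadjacent ($m>k+1$), and $(12\dd 3)\pi=0$ is exactly the statement that $a$ and $b$ are nonadjacent ($k>j+1$); together they assert that the three entries are pairwise nonadjacent, in keeping with the title of this subsection.

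Next I would apply the bijection $\tau$ of Theorem~9 verbatim, producing a permutation of length $n+1$ of the form $LR$ with $L$ carrying the values $b+1,\ldots,n+1$ and $R$ carrying $1,\ldots,b$, both avoiding $(1\dd 2\dd 3)$. The restrictions already present persist: $b+1$ is not the rightmost entry of $L$ (that slot is reserved for $c+1$) and $b$ is not the leftmost entry of $R$ (reserved for $a$). Exactly as in Theorem~10, the condition $(1\dd 23)\pi=0$ supplies an entry of $\pi$ lying strictly between $b$ and $c$; being smaller than $b$ it lands in $R$, strictly to the left of the largest entry $b$, so $b$ is also not the second-leftmost entry of $R$, giving the factor $C_i-2C_{i-1}$ for a right component of length $i$.

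The genuinely new step is to extract the mirror restriction on $L$ from $(12\dd 3)\pi=0$. I would track the entry of $\pi$ lying strictly between $a$ and $b$; since the only entry smaller than $b$ to the left of $b$ is $a$, this entry exceeds $b$, so it belongs to $L$, and under $\tau$ it ends up strictly to the right of the smallest entry $b+1$ but strictly to the left of the rightmost entry $c+1$. Hence $b+1$ is neither the rightmost nor the second-rightmost entry of $L$. Because $b+1$ is the smallest entry of $L$, it can only play the role of the ``$1$'' in a forbidden increasing triple, so deleting it from either excluded terminal slot is a bijection onto $(1\dd 2\dd 3)$-avoiders of length one less; this yields, by the mirror of the Theorem~10 argument, the factor $C_{n-i+1}-2C_{n-i}$ for a left component of length $n-i+1$. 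After checking that the inverse map is well defined on the pairs $(L,R)$ satisfying these two constraints, I would obtain
$$ f_n \;=\; \sum_{i=3}^{n-2} \bigl(C_{n-i+1}-2C_{n-i}\bigr)\bigl(C_i-2C_{i-1}\bigr), $$
the range being forced by $|R|=i\ge 3$ and $|L|=n-i+1\ge 3$.

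Finally I would pass to generating functions, using $C(x)=1+xC(x)^2$ to collapse the two truncated single-variable sums: $\sum_{i\ge 3}(C_i-2C_{i-1})x^i=x(C(x)-1)^2=x^3C(x)^4$ and $\sum_{m\ge 2}(C_{m+1}-2C_m)x^m=(C(x)-1)^2=x^2C(x)^4$, whose Cauchy product is $x^5C(x)^8$, as claimed. The coefficient then follows from Corollary~5 with $d=7$, after dividing by $x^2$ and replacing $n$ by $n+2$ to account for the length shift, giving $\frac{8}{n+3}{2n-3 \choose n+2}$. I expect the main obstacle to be the careful justification of the new second-rightmost restriction on $L$ together with the matching of the truncated index range, since (as in Theorems~8--10) it is precisely the omitted boundary terms that convert the naive convolution into the exact power $x^5C(x)^8$.
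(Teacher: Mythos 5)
Your proposal is correct and follows essentially the same route as the paper: the bijection $\tau$ from Theorem 9, the two-sided nonadjacency restrictions forcing $b$ out of the two leftmost slots of $R$ and $b+1$ out of the two rightmost slots of $L$, and the resulting convolution $f_n=\sum_{i=3}^{n-2}(C_{n-i+1}-2C_{n-i})(C_i-2C_{i-1})$. Your closing step---factoring the sum as the product $\bigl(x^3C(x)^4\bigr)\bigl(x^2C(x)^4\bigr)=x^5C(x)^8$---is a cleaner execution than the paper's term-by-term expansion, and your indices agree with the paper's textual derivation rather than with its displayed equation (5), which contains an off-by-one slip (as written it would give $f_5=0$, whereas $f_5=1$, witnessed by the permutation $25314$).
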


\begin{proof}

Our proof will be nearly identical to the one we constructed for Theorem 10, with the exception that an extra restriction is required for $R$, to eliminate adjacency. 

Let $\pi \in S_n^1(1\dd 2\dd 3)$ with $(1\dd 23)\pi = 0$ and $(12\dd 3)\pi = 0$. Let the single occurrence of the pattern in the permutation consist of $\pi_j=a, \pi_k=b$ and $\pi_{m}=c$, with $j\leq k-1 \leq m-2 $. We see that since there is no other occurrence of $(1\dd 2\dd 3)$ in $\pi$, all the elements to the right of $c$ must be smaller than $b$. Also, we see that the only element to the left of $b$ that is smaller than $b$ is $a$. We will once again apply the bijection $\tau$ we introduced in the proof of Theorem 9. That is, we place $a$ in the position of $b$, place $b$ in the original position of $a$, place $c$ immediately to the left of $a$, and insert a new $b$ into the original location of $c$. Lastly, we increment all the values to the left of $a$ by one. Our permutation now has the familiar structure, $LK$, with $L$ consisting of the elements $b+1,b+2,\ldots,n+1$, and $R$ consisting of the elements $1,2,\ldots,b$. In symbols, we have the permutation

$$\ell_1,\ell_2,\ldots,a,\ldots,b,r_1,r_2,\ldots,c,\ldots,r_{b-2},$$ transformed into the permutation
$$\ell_1+1,\ell_2+1,\ldots,b+1,\ldots,c,a,r_1,r_2,\ldots,b,\ldots,r_{b-2},$$

Let us denote the set containing all the permutations of the latter kind by $J'''_{n+1,b}$. The observations we made on $J''_{n+1,b}$ apply here as well. We see that $b+1$ cannot be the rightmost element in $L$, $b$ cannot be the leftmost element in $R$, and $b$ cannot be the second leftmost element in $R$ either. Since none of the permutations in the domain of the bijection have $c$ as the second rightmost element in $L$, we also see that $b$ cannot be the second leftmost element in $L$.

We now enumerate $J'''_{n+1,i}$. In the proof of Theorem 10, we demonstrated that the number of ways in which we can permute the elements of $R$ is given by $C_{i}-2C_{i-1}$. We now have exactly the same situation with $L$, since $i+1$ can neither be the rightmost, nor the second rightmost element in that component. Since $i+1$ is the smallest element however, we know that placing it in either of these locations cannot contribute to any occurrences of a $(1\dd 2\dd 3)$ pattern in $L$, and so it becomes straightforward to account for the permutations where this happens. We simply subtract $2C_{n-i}$ to find the total number of possible ways of permuting the elements in $L$ to be $C_{n-i+1}-2C_{n-i}$. Hence, the number of elements in $J'''_{n+1,i}$ is given by $(C_{n-i+1}-2C_{n-i})(C_{i}-2C_{i-1})$. We sum over $i\in \left\{3,\ldots,n-2\right\}$;

\begin{equation}
 \sum_{i=3}^{n-2} (C_{n-i}-2C_{n-i-1})(C_i-2C_{i-1}).
\end{equation}

We remind ourselves that the Catalan function $C(x)$ satisfies $C(x)=1+xC(x)$, and thus we obtain the following when we translate (5) into a generating function.

\begin{eqnarray*}
 &\sum_{n \geq 5}{\sum_{i=3}^{n-2} {(C_{n-i}-2C_{n-i-1})(C_i-2C_{i-1})}x^n} \\
=&x^{-1}(C(x)-2xC(x))^2+2(C(x)-2xC(x))-2(x^{-1}C(x)-2C(x))+x-2+x^{-1}\\
=&x^5C(x)^8. \\
\end{eqnarray*}

We can then extract the $nth$ coefficient by standard means, to produce the counting formula in Theorem 11.\end{proof}

\section{Further remarks}

\subsection{A Game theoretic application}

As it turns out, the counting formula in Theorem 4 lends itself to considerations of a basic card game \cite{MC}. Suppose we have a deck of $n$ red cards and $n$ black cards and we play the following game:  We draw cards, one by one, from the top of the deck, and we can stop at any point.  When we stop, we get a score that is the number of red cards minus the number of black cards drawn, or zero if this number is negative.  If our strategy is to stop once we have reached a score of $r$, what should $r$ be in order to maximize our expected score, given a deck of $2n$ cards? To answer that question we need to know the probability that the cards in the deck are arranged in such a way that we will at some point have drawn $r$ more red cards than black cards. Let $P_{r,2n}$ stand for the number of ways we can arrange a deck of $2n$ cards in such a way that if we draw all the cards from it the highest score we will come across is $r$. In other words, $P_{r,2n}$ stands for the number of permutations of the cards in the deck such that drawing from it yields at some point an excess of $r$ red cards over black cards, but no more than that. 

Now, we are looking for the number of ways we can arrange the cards in the deck so that we are able to draw 'at least' $r$ more red cards than black cards, and so the number of ways we can arrange the cards for that to happen is given by $P_{r,2n}+P_{r+1,2n}+P_{r+2,2n}+\ldots+P_{n,2n} = \sum_{i=r}^n {P_{i,2n}}$. The probability of this happening is then given by $$\frac{1}{{2n \choose n}}\sum_{i=r}^n P_{i,2n}.$$%{\frac{P(i,2n)}{2n \choose n}}.$$ 

We now want to find $P_{r,2n}$. Every permutation of a deck with $2n$ cards can be considered to be a lattice path. Going through the deck by drawing one card at a time, we translate each red card to an upstep $(1,1)$, and each black card to a downstep $(1,-1)$. If we start at the origin, the path should end at $(2n,0)$, since the number of red cards and black cards are equal. If the highest score we are able to get by drawing from the deck is $r$ then that means the lattice path corresponding to the arrangement of the cards in the deck reaches a maximum height of $r$ over the $x$-axis. In the following theorem, we give a bijection between the set of lattice paths of length $2n-2r$ that start at the origin, reach a height $r$ over the $x$-axis but do not exceed that height, and end in $(2n-2r,0)$, and the set of Dyck paths of semilength $n$ that start with $r$ upsteps, end with $r$ downsteps and touch the $x$-axis somewhere between the endpoints. The reason for why the lengths of the corresponding objects from each family differ by $2r$ is that in the family of Dyck paths we have just described, the $r$ upsteps in the beginning, and $r$ downsteps in the end are fixed, and so do not contribute to the number of ways we can create such a path.

\begin{theorem}
The number of lattice paths that begin at the origin, $(0,0)$, consist only of upsteps $(1,1)$ and downsteps $(1,-1)$, end at $(2n-2r,0)$, and touch the line $y=r$ at least once but do not cross it, has generating function $x^{r}C(x)^{2r+1}$, and counting formula	
\[
	\frac{2r+1}{n+r+1}{2n \choose n+r}.
\]
\end{theorem}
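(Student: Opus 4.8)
The plan is to reduce the statement to the enumeration already carried out in Theorem 4 (equivalently Corollary 5) by exhibiting the bijection promised in the paragraph preceding the theorem, between the lattice paths under consideration and the class $\mathcal{D}_{r,r}$ of reducible Dyck paths with $k=p=r$. Once that correspondence is in place the count is immediate from the case $d=2r$, after the same kind of size shift by which Theorems 8--11 pass from a Dyck statistic to the associated permutation statistic. The shift here simply records that the $r$ boundary upsteps and $r$ boundary downsteps are fixed in the Dyck picture and therefore do not contribute to the grading of the lattice paths.

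First I would set up the map in the easy direction, from $\mathcal{D}_{r,r}$ to the lattice paths. Take a Dyck path $P$ of semilength $n$ in $\mathcal{D}_{r,r}$; it begins with $r$ upsteps and ends with $r$ downsteps, so deleting these $2r$ boundary steps leaves a middle portion $M$ running from height $r$ back to height $r$. Since $P$ never dips below the $x$-axis, $M$ stays weakly above $0$; and since $P$ is reducible it returns to the $x$-axis at an interior vertex, which, because of the fixed boundary runs, must lie strictly inside the horizontal span of $M$, so $M$ attains height $0$. Reflecting $M$ in the horizontal line $y=r/2$ (equivalently, translating it to the origin and reflecting across the $x$-axis) converts ``stays $\ge 0$ and touches $0$'' into ``stays $\le r$ and touches $r$'', producing a lattice path $Q$ from the origin that returns to the $x$-axis and meets the line $y=r$ without crossing it. The inverse reflects $Q$ back, translates up by $r$, and reattaches $r$ upsteps in front and $r$ downsteps at the end; the non-negativity of the reconstructed path and its membership in $\mathcal{D}_{r,r}$ follow from the same observations read in reverse.

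The crux, and the step I expect to be the main obstacle, is verifying that this really is a bijection, i.e. that the three defining constraints match up in both directions: $P\ge 0$ everywhere $\iff$ $Q\le r$ everywhere; the interior $x$-axis touch forced by reducibility $\iff$ $Q$ attaining height $r$; and the two fixed boundary runs of length $r$ $\iff$ the difference in size between the two families. The delicate point is the second equivalence. In the forward direction one checks that the interior return of $P$ cannot occur during the initial $r$ upsteps or the final $r$ downsteps, where the height is strictly positive, so it is recorded inside $M$ and hence survives as a visit of $Q$ to $y=r$. Conversely, one checks that a path $Q$ meeting $y=r$ forces the reconstructed $P$ to have its first and last components of semilength at least $r$ (they contain the prepended and appended runs), so that $P$ is genuinely reducible in $\mathcal{D}_{r,r}$ and not merely an irreducible Dyck path.

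With the bijection established I would finish by quoting Corollary 5 with $d=2r$: the class $\mathcal{D}_{r,r}$ of semilength $n$ has generating function $x^{2r}C(x)^{2r+1}$ and is enumerated by $\tfrac{2r+1}{n+1}\binom{2n-2r}{n}$. Because the bijection strips the $r$ fixed upsteps and $r$ fixed downsteps, the lattice paths are graded by a size that is $r$ smaller; dividing the generating function by $x^r$ gives $x^rC(x)^{2r+1}$, and replacing $n$ by $n+r$ in the closed formula gives $\tfrac{2r+1}{n+r+1}\binom{2n}{n+r}$, exactly the shift used in Theorems 8--11. As an independent check on the closed form, one can count these paths directly by the reflection principle: the number of free return paths of semilength $n$ staying weakly below $y=r$ is $\binom{2n}{n}-\binom{2n}{n+r+1}$, and subtracting the analogous count for the bound $y=r-1$ isolates those that actually reach height $r$, namely $\binom{2n}{n+r}-\binom{2n}{n+r+1}=\tfrac{2r+1}{n+r+1}\binom{2n}{n+r}$, in agreement with the formula above.
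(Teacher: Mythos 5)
Your proposal is correct and follows essentially the same route as the paper: the same bijection between the lattice paths and the class $\mathcal{D}_{r,r}$ (you describe it from the Dyck-path side, the paper from the lattice-path side, but the translation/reflection and the stripping or attaching of the $r$ boundary upsteps and downsteps are identical), followed by an appeal to Theorem 4 with $k=p=r$ and a division of the generating function by $x^r$ to account for the $2r$ fixed steps. Your closing reflection-principle computation of $\binom{2n}{n+r}-\binom{2n}{n+r+1}$ is an extra consistency check not present in the paper, and your verification of why the interior return of a reducible path must occur inside the middle portion is carried out more carefully than in the paper's proof, which simply asserts the bijection.
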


\begin{proof}
Let $\lambda$ be a lattice path consisting only of upsteps and downsteps, $(1,1)$ and $(1,-1)$ respectively, that begins at the origin and ends at $(2n-2r,0)$. The following set of operations defines a bijection from the set of all such $\lambda$ to the set of Dyck paths of semilength $n$ that start with $r$ upsteps, end with $r$ downsteps and touch the $x$-axis somewhere between the endpoints. Shift $\lambda$ by $r$ units to the right in the plane, and $r$ units down, so that it now starts in $(r,-r)$. Now reflect the resulting path about the $x$-axis. We now have a path that starts in $(r,r)$, and ends in $(2n-r,r)$, and touches the $x$-axis somewhere between the end points. Lastly, append $r$ consecutive upsteps to the beginning of the path, and $r$ consecutive downsteps to the end of it. The path now starts in $(0,0)$ and ends in $(2n,0)$. This process is demonstrated in Figure \ref{fig4}. The enumeration of this type of paths has already been dealt with in Theorem 4. Since $p$ and $k$ are both equal to $r$ here, the generating function becomes $x^{2r}C^{2r+1}$. However, we have added $2r$ steps to $\lambda$ so we need to divide the generating function by $x^r$. The result is the function $x^{r}C^{2r+1}$. The $nth$ coefficient of this generating function can then be extracted by standard methods.\end{proof}

\begin{figure}
\newcommand\p{\circle*{1}}

\setlength{\unitlength}{1mm}
\begin{picture}(140,100)(33,-10)

\allinethickness{.03mm}

%\put(0,-30){
\put(50,60){
\put(0,0){\grid(90,20)(5,5)}
\put(0,-10){
\path(0,20)(5,15)(10,10)(15,15)(20,10)(25,15)(30,20)(35,25)(40,30)(45,25)(50,30)(55,25)(60,30)(65,25)(70,20)
}}

\put(-55,0){
\put(105,0){\grid(90,20)(5,5)}
\path(105,0)(110,5)(115,10)(120,15)(125,20)(130,15)(135,20)(140,15)(145,10)(150,5)(155,0)(160,5)(165,0)(170,5)(175,0)(180,5)(185,10)(190,5)(195,0)
}

\put(-55,30){
\put(105,0){\grid(90,20)(5,5)}
\path(115,10)(120,15)(125,20)(130,15)(135,20)(140,15)(145,10)(150,5)(155,0)(160,5)(165,0)(170,5)(175,0)(180,5)(185,10)}

\put(95,55){$\updownarrow$}
\put(95,25){$\updownarrow$}

%\put(95,105){$\updownarrow$}
\allinethickness{.5mm}
\path(50,0)(140,0)
\path(50,30)(140,30)
\path(50,70)(140,70)
%}
\end{picture}
\caption{A lattice path representing a particular arrangement of the cards (top), the path after we have flipped it over and translated it $r$ units to the right and $r$ units up (middle), and the corresponding Dyck path under the bijection in Theorem 12 (bottom).}
\label{fig4}
\end{figure}
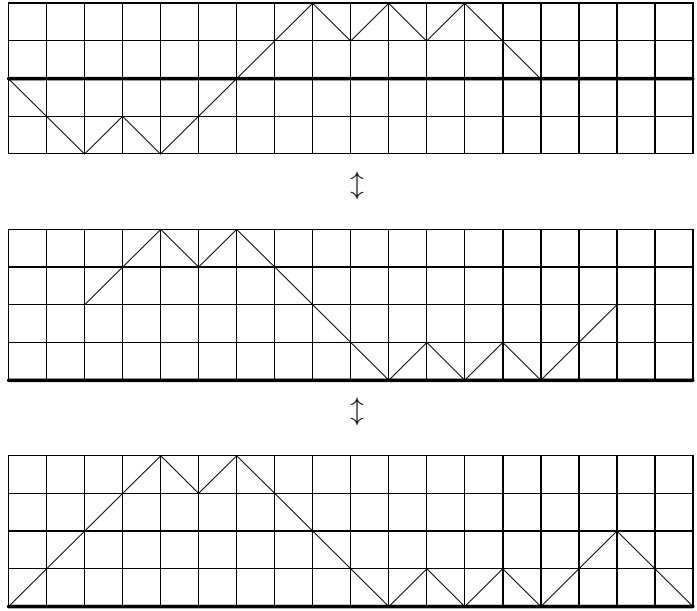

So we see that $$P_{r,2n}=\frac{2r+1}{n+r+1}{2n \choose n+r}.$$ Bearing in mind our considerations in the first paragraph of this section, we then see that the probability of being able to achieve a score $r$ by drawing from a deck of $2n$ cards, is given by $$\frac{1}{{2n\choose n}}\sum_{k=r}^n  {\frac{2k+1}{n+k+1}{{2n \choose n+k}}}.$$ Thus, if our strategy is to stop once we have reached a score of $r$, the expected value is $$E_{r}(n)=\frac{r}{{2n \choose n}}\sum_{k=r}^n  {\frac{2k+1}{n+k+1}{{2n \choose n+k}}},$$ since each successful run will yield a score of $r$. Since we are looking for the value of $r$ that maximizes $E_{r}(n)$ for a given $n$ we must then look at the set of expected values we get for all possible values of $r$. 

Now, we might be interested in trying to find a function that maps $n$ to the value of $r$ that gives the highest expected return for a deck of cards of size $2n$. If we write out the optimal value of $r$ for the first few $n$, we obtain the following sequence, $$1,1,1,1,2,2,2,2,2,2,2,2,3,3,3,3,3,3,3,3,3,3,3,3,4,\ldots$$ When we inspect the sequence, we see that $1$ appears four times, $2$ appears eight times, $3$ appears $12$ times, $4$ appears $16$ times, and so on. Based on this observation we conjecture that the value of $r$ that gives the highest expected return for a given $n$ is given implicitely by the equation $n=max_r\left\{{4{r+1 \choose 2}} \leq n\right\}$.

\end{document}